\DeclareMathOperator{\sign}{sign}
\newtheorem{thm}{Theorem}[section]
\newtheorem{prop}{Proposition}[section] 
\newtheorem{defn}{Definition}[section]
\theoremstyle{definition}
\newtheorem{rem}{Remark}[section]
\newtheorem{exam}{Example}[section]
\newcommand{\setan}{{}_{\stackrel{\textstyle \longrightarrow}{n}}}
\newcommand{\Rb}{\mathbb{R}}
 \author{Mário M. Gra\c{c}a\thanks{Departamento de Matem\'{a}tica,
Instituto Superior T\'ecnico, Universidade Técnica de Lisboa, Av. Rovisco Pais, 1049--001 Lisboa, Portugal.}}
\begin{document}
 
\title{Quadrature as a   least-squares and minimax problem}
\maketitle

 \begin{abstract}

\noindent
The vector of weights  of an interpolatory quadrature rule with $n$ preassigned nodes is shown to be the least\--squares solution $\omega$ of an overdetermined linear system  here called {\em the fundamental system} of the rule.  It is established the relation between $\omega$ and the minimax solution $\stackrel{\ast}{z}$ of the fundamental system, and   shown the constancy of the  $\infty$-norms of the respective residual vectors which are equal to the  {\em principal moment} of the rule. Associated to $\omega$ and $\stackrel{\ast}{z}$  we define several parameters, such as the angle of a rule, in order  to assess the main properties of a rule or to compare  distinct rules.   These parameters are tested  for some  Newton\--Cotes, Fej\'er, Clenshaw\--Curtis and Gauss\--Legendre rules.
 \end{abstract}

\medskip
\noindent
{\it Key-words}: Quadrature rule,  least-squares solution, minimax solution, principal moment.

\medskip
\noindent
{\it MSC2010}: 65D30, 65D32, 65F20, 65D05.

%=======================================================================
%========================================================================
\section{Introduction}\label{introd}

We establish several connections 
 between the computation of the weights  $\omega$ of an interpolatory quadrature rule with $n$ given nodes and the least\--squares and minimax solutions  of  a certain  inconsistent linear system of $(n+1)$ equations in $n$ unknowns. This system, hereafter called the {\em fundamental system} of the rule,  is obtained by applying the undetermined coefficients method to a convenient  basis ${\cal B}$ of polynomials,  which depends  on the  given abscissas.  We prove  that  the vector of weights $\omega$  has the property of having a constant residual norm,  which is independent of the norm used, equal to the so called {\em principal moment} of the rule.    General references for  interpolatory quadrature rules and its applications  are,  for instance,   \cite{davis}, \cite{krylov}, \cite{golub1}, \cite{evans}, \cite{kythe}. 
% The main result gives  a necessary and sufficient condition for the convergence of the rule. More precisely, we prove  that the minimax and the least\--squares solutions  of the referred fundamental system are asymptotically equal.

 \medskip
\noindent
In order to approximate the integral $I(f)=\int_a^b f(x)\, w(x) dx$ (with  $w(x)$  a  nonnegative weight function, continuous and integrable on the open interval $(a,b)$)  we  assume that an ordered  set  ${\cal N}$ of nodes  is given, say  $t_1<t_2<\ldots<t_n$  (with $t_i$ belonging or not to the interval of integration). 
 We  consider the quadrature rule
  \begin{equation}\label{rule}
 Q_n(f)=\sum_{i=1}^n \omega_i\, f(t_i).
 \end{equation}
The entries of the vector $\omega=[\omega_1,\ldots,\omega_n]^T$ are the unknown weights of the rule.
The rule \eqref{rule} is assumed to be interpolatory,  meaning that   it is exact at least for all polynomials $p$ of degree less or equal to $n-1$:
% \begin{equation}\label{cond}
$$
 Q_n(p)= I(p),\quad \forall \, p\,\in {\cal P}_{n-1},
 $$
% \end{equation} 
  where  ${\cal{P}}_{k}$ denotes the vector space of polynomials of degree less or equal to $k$. The  rule  $Q_n(f)$ is said to have {\em degree  of exactness} $d$ (or simply degree $d$)  if it is exact for all polynomials of degree $\leq d$ (i.e., for all $p\in\, {\cal P}_{d}$)  but it is not exact for some polynomial in ${\cal P}_{d+1}$. 

\medskip
  \noindent
It is well known that any interpolatory rule  $Q_n(f)$ has  at least degree $n-1$ and at most $2\,n-1$ (see for instance \cite{krylov}, Ch. 7). Consequently,  the respective weights $\omega$  can be obtained using the so called undetermined coefficients method applied to a basis of ${\cal P}_{2\,n-1}$. Traditionally one considers the standard basis   $\phi_i(t)=t^{i}$,  with $i=0,1,\ldots,(2\,n-1)$, and the undetermined coefficients method applied to this basis leads to the   following overdetermined linear system of $2\,n$ equations in $n$ unknowns 
 \begin{equation}\label{condA}
 \left\{
 \begin{array}{ll}
 Q_n(\phi_0)&=\mu_0\\
  Q_n(\phi_1)&=\mu_1 \\
  \vdots & \vdots  \\
    Q_n(\phi_{2\,n-1})&=\mu_{2\,n-1},
 \end{array}
 \right.
 \end{equation} 
 where the moments $\mu_i$ are  assumed to be finite, and equal to
%  \begin{equation}\label{momentos}
$$
  \mu_i=I(\phi_i(t) )=\int_a^b \phi_i(t) \,w(t)\, dt, \quad i=0,1,\ldots,(2\,n-1).
  $$
%  \end{equation}
\noindent
Since the degree of the rule is generally unknown, one cannot decide  {\em a priori} what is the  {\it relevant}  set of $n+1$ equations in the  system \eqref{condA}  (see \cite{cheney1}, Ch. 2,  for the discussion on  the choice of  relevant equations from an overdetermined system). Furthermore, the matrix of the system is generally dense and so is  natural to ask whether there exists another basis in which the system \eqref{condA}  has a simpler form. In our work \cite{graca2} we show that  choosing a basis  ${\cal B}$ as in Definition  \ref{def1},  we  easily replace \eqref{condA} by a relevant linear  system with  $n+1$  equations in $n$ unknowns.  Such system   will be called here the {\em fundamental system}  of  the rule, since it is univocally determined by   the set of nodes ${\cal N}$ which in turn determines  the basis  ${\cal B}$. Thus, the choice of this particular basis ${\cal B}$ may be seen as the right way to extract the relevant set of equations from the  system \eqref{condA}. Otherwise (taking the standard basis of polynomials) the  computation of a    \lq\lq{solution}\rq\rq of \eqref{condA}    relies on heavy    tools such as  Pólya's algorithm \cite{polya},  detailed in \cite{cheney1} (see also  \cite{boggs}). 
  
   \medskip
 \noindent
In Section \ref{main} we study the main properties of the fundamental system $F\, \omega=\tilde c$ associated  to the rule  $Q_n(f)$.
This system has the remarkable property of containing  a  triangular subsystem $A\,\omega= c$,    from which the weights $w$ can be computed by backward substitution.  The last entry of the vector $\tilde c$ contains a non null quantity $\mu_Q$ which we call the {\em principal moment} of the rule.  The computation  of the parameter $\mu_Q$ gives automatically   the degree of exactness of the  rule (see the work of the author \cite{graca2}). Therefore, the principal moment can  be seen as a kind of  rule's {\em signature}.

\medskip
 \noindent
The main results are given in Propositions \ref{propA} and \ref{proposicao1}. The first proposition  shows  that the least\--squares solution of the fundamental system  coincides with  the vector of weights $\omega$, and  in  Proposition \ref{proposicao1}\--(a)  we prove that the  minimax solution $\stackrel{\ast}{z}$  of the fundamental system differs from the least\--squares solution $\omega$  by  a correction vector $\tau$.  This correction $\tau$ (and so the minimax solution)  is    recursively computed  by solving a certain triangular system.  

\medskip
\noindent
Several parameters related to the least\--squares $\omega$  and minimax $\stackrel{\ast}{z}$    solutions are defined in order to preview the asymptotic behavior of  a rule or to compare  distinct rules. For a fixed number $n$ of nodes, the first two main quantities to be considered are respectively $N_\omega=||\omega||_1$ and  $N_{{\stackrel{\ast}{z}}}=||\stackrel{\ast}{z}||_1$.  Other interesting parameter is the angle between $\omega$ and $\stackrel{\ast}{z}$, which we call {\em the angle of the rule}. 

\medskip
\noindent
 Let us now explain the heuristic behind the choice of these parameters. 
The most remarkable property   is that $\omega$ and $\stackrel{\ast}{z}$ have    residual vectors   $r(\omega)$ and $r(\stackrel{\ast}{z})$  of constant  norms (see Proposition  \ref{proposicao1} (b))  in the following sense  
$$
||r(\omega)||_p=||r(\stackrel{\ast}{z})||_\infty=|\mu_Q|,\,\, \mbox{ for any}\,\,  p=\infty, 1,2,3,\ldots .
$$
  This property allows to preview that  a convergent rule should have their parameters $N_\omega$ and  $N_{{\stackrel{\ast}{z}}}$ aproaching zero as $n$ goes to $\infty$. Moreover,  the angle of a rule should also be close to zero  for  $n$ sufficiently large. As observed in Section \ref{parametros}, the behavior of the norm parameters $N_\omega$ and  $N_{\stackrel{\ast}{z}}$ for the Newton-Cotes rules  do not decrease as $n$  increases  (see Figure \ref{FiguraA}),   and so one concludes that  the Newton\--Cotes rules cannot be convergent.

In Section \ref{parametros}  we test and compare the behavior of  several parameters   for the   Fej\'er (F), Clenshaw\--Curtis (CC) and Gauss\--Legendre rules (GL), for $n$ varying from $2$ to $n=17$.   
 We give numerical and graphical evidence that  the Fej\'er and Clenshaw\--Curtis rules have an angle close to the angle of Gauss\--Legendre rule. This is a plausible explanation for a phenomenon observed by several authors, in particular by  Lloyd N. Trefethen in  \cite{trefethen}:   in spite of its low degree of exactness, for large $n$,  the CC rule is almost as accurate as GL.

\medskip
 \noindent
The least\--squares/minimax approach to interpolatory quadrature rules  gives a great deal of information to assess the main properties of a rule, with no explicit need to appeal to interpolation theory. Indeed one can deduce an error expression, $E_n(f)$ (see Proposition \ref{prop1} (c)), in the case the integrand function is assumed to be sufficiently smooth. Moreover we do not  need  to make any assumptions on the smoothness of the integrand function $f$ (apart from the existence of $I(f)$) to compute the principal moment of a rule as well as other relevant  parameters such as the residual norms or the angle of the rule.

  \medskip
  \noindent
In conclusion, the minimax/least\--squares approach to interpolatory quadrature here discussed  gives rise to interesting analytic and geometric aspects opening new research directions.    In particular,  we think it is worth to further explore the role  of the  referred parameters on the fundamental questions of the convergence and accuracy of an interpolatory rule. Another direction of research is to relate  the   least-squares/minimax approach   with the available methods   for simultaneous computation of nodes and weights using Jacobi matrices  (see \cite{golub2} and  \cite{golub1}, Ch. 10).

    %===================================================================================================================
 \section{The fundamental system of a rule}\label{main}
  
  \noindent
Given an ordered  set ${\cal N}$ of  nodes we construct a polynomial basis ${\cal B}={\cal B}_{\cal N}$, defined  in \eqref{n1} and \eqref{n3}, dependent on the nodes. The undetermined coefficients method is then applied to ${\cal B}$  leading to an overdetermined system   ${\cal F}= {\cal F}_{ {\cal B},{\cal N}} $,  with $n+1$ equations in $n$ unknowns.

 \begin{defn}\label{def1} (Canonical basis)
 
 \noindent
 Given a set $\cal{N}$ of ordered nodes $t_1<t_2<\ldots <t_n$, the polynomials  
   \begin{equation}\label{n1}
 \left\{
 \begin{array}{l}
 \phi_0(x)=1\\
 \phi_{j}=\phi_{j-1} (x)\times (x-t_j),\quad 1\leq j\leq n-1
 \end{array}
 \right. 
 \end{equation}
 (where the polynomial  $\phi_j(x)$ has degree $j$) is a basis of  ${\cal{P}}_{n-1}$.This basis is here called the {\em canonical} basis associated to $\cal{N}$.

\noindent
 To form a basis of ${\cal P}_{n+k}$  (where $k$ will be specified in accordance with the particular rule under consideration), 
we complete the canonical basis    with the  polynomials $q_n(x)$, $q_{n+1}(x)$, $\ldots$, $q_{n+k}(x)$ defined by
   \begin{equation}\label{n3}
   \begin{array}{ll}
   q_n(x)=\phi_{n-1} (x)\, (x-t_n)\\
   q_j(x)=q_{j-1} (x)\, (x-t_r),\quad j\geq n+1,\,\,\mbox{with}\,\, r\equiv j \pmod{n} 
   \end{array}
   \end{equation}
    \end{defn}
    \begin{rem}\label{rem1}
   
   \noindent  
  Note that for any $j\geq 1$,  the zeros of the  polynomial $\phi_j(x)$ are exactly  $t_1,t_2,\ldots,t_j$. Also the nodes $t_1,t_2,\ldots, t_n$ are zeros of each polynomial $q_i(x)$ given in \eqref{n3}, for any $i\geq n$.
   \end{rem}
   
   \noindent
The canonical basis and the extended basis have been used  in a previous work   for  the simultaneous  computation of the degree of a rule and its weights  via the undetermined coefficients method. We recall now the main result in \cite{graca2}.
    
 \begin{thm}\label{prop1} \cite{graca2} 
 
 \noindent
  Let ${\cal N}$ be the set of given nodes,    $t_1<t_2< \ldots< t_n$, and consider the basis   $B_0=\{\phi_0(t)$,$ \ldots$, $\phi_{n-1}(t)\}$ of ${\cal P}_{n-1}$, and  the basis   $B=B_{{\cal N}}=B_0\cup\{q_n(t)$ ,  $\ldots$,  $q_{n+k}(t) \}$ of  ${\cal P}_{n+k}$, where  $\phi_i$ and $q_i$ are defined  in   \eqref{n1} and \eqref{n3}. Denote by $\mu_i$ the moments $\mu_i=\int_{a}^b \phi_i(t) \, w(t)\,dt$, $i=0,\ldots,{n+k}$.

 \begin{itemize}
 \item[(a)] The undetermined coefficient method applied to the basis $B_0$ determines  (uniquely)  the weights  $\omega_i$ of  the  rule \eqref{rule} for approximating the integral $I(f)=\int_a^bf(x)\, w(x)\, dx$.
These weights  are 
\begin{equation}\label{coef prop1}
\left\{\begin{aligned}
\omega_n&=\frac{\mu_{n-1}}{\phi_{n-1}(t_{n})},\\ 
\omega_i&=\displaystyle{\frac{\mu_{i-1}-\sum_{k=i+1}^n\phi_{i-1}(t_{k}) \omega_k}{\phi_{i-1}(t_{i})}}, \quad i=n-1, n-2, \ldots, 1.
\end{aligned}\right.
\end{equation}
\item[(b)]The undetermined coefficient method applied to the basis $B$, determines   the degree of exactness $d=\operatorname{deg} (Q_n(f))$ as being the  integer $d\geq n-1$ for which
%\begin{equation}\label{degree cond}
$$
 \mu_j=I(q_j)=0 , \,\, \mbox{for all}  \,\,  \,\,n\leq j\leq d\,\, \mbox{and}  \,\, \mu_{d+1}=I(q_{d+1})\neq 0. 
 $$
 %\end{equation}
\item[(c)]  If    $f$ is of class $C^{d+1}([a,b])$,   the error expression of the rule is
 \begin{equation}\label{error prop}
 E_{Q_n}(f)= c_n  f^{(d+1)}(\xi), \quad\text{with $\,\,c_n=\frac{\mu_{d+1}}{(d+1)!}$},
 \end{equation}
 for some $\xi\in(a,b)$.
 \end{itemize}
 \end{thm}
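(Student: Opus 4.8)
The plan for part (a) is to make the exactness requirement explicit on the canonical basis and to exploit the nested root structure recorded in Remark \ref{rem1}. Imposing $Q_n(\phi_i)=\mu_i$ for $i=0,1,\ldots,n-1$ gives $n$ linear equations in the $n$ unknowns $\omega_1,\ldots,\omega_n$, with coefficient matrix $[\phi_i(t_k)]$. Since $\phi_i(t_k)=0$ precisely when $k\le i$, each equation
$$\sum_{k=1}^{n}\omega_k\,\phi_{i}(t_k)=\mu_{i}$$
collapses to $\sum_{k=i+1}^{n}\omega_k\,\phi_{i}(t_k)=\mu_{i}$, so the system is triangular. Reading the equation $i=n-1$ first yields $\omega_n=\mu_{n-1}/\phi_{n-1}(t_n)$, and back-substitution through $i=n-2,\ldots,0$ produces the stated recursion for $\omega_{n-1},\ldots,\omega_1$ (equation $i$ determining $\omega_{i+1}$). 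The diagonal entries $\phi_{i-1}(t_i)$ are nonzero because $t_i$ is not among the roots $t_1,\ldots,t_{i-1}$ of $\phi_{i-1}$; hence the solution exists and is unique. Finally, because $B_0$ spans ${\cal P}_{n-1}$, exactness on $B_0$ is exactness on all of ${\cal P}_{n-1}$, so the resulting rule is interpolatory.

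For part (b) the key observation is again in Remark \ref{rem1}: for every $j\ge n$ all nodes $t_1,\ldots,t_n$ are roots of $q_j$, so
$$Q_n(q_j)=\sum_{k=1}^{n}\omega_k\,q_j(t_k)=0.$$
Thus the rule is exact on $q_j$ if and only if $I(q_j)=\mu_j$ equals $0$. Since $\{\phi_0,\ldots,\phi_{n-1},q_n,\ldots,q_{n+k}\}$ is a graded basis (one polynomial of each degree $0,1,\ldots,n+k$), any element of ${\cal P}_{m}$ is a combination of those basis polynomials of degree $\le m$, and both $I$ and $Q_n$ are linear and already agree on $\phi_0,\ldots,\phi_{n-1}$ by part (a). Consequently the rule is exact on ${\cal P}_m$ exactly when $\mu_j=0$ for all $n\le j\le m$. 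The degree of exactness is therefore the largest such $m$, i.e.\ the integer $d$ with $\mu_j=0$ for $n\le j\le d$ and $\mu_{d+1}\neq 0$.

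For part (c) I would argue from the error functional $E_{Q_n}(f)=I(f)-Q_n(f)$, which by (b) annihilates ${\cal P}_d$. Representing it through its Peano kernel,
$$E_{Q_n}(f)=\int_a^b K_{d}(t)\,f^{(d+1)}(t)\,dt,\qquad K_{d}(t)=\frac{1}{d!}\,E_{Q_n}\!\bigl[(x-t)_+^{d}\bigr],$$
and invoking the mean value theorem for integrals, one gets $E_{Q_n}(f)=f^{(d+1)}(\xi)\int_a^b K_d(t)\,dt$ for some $\xi\in(a,b)$. To identify the constant, I would test the representation on the monic polynomial $q_{d+1}$: since $q_{d+1}^{(d+1)}\equiv(d+1)!$ this gives $E_{Q_n}(q_{d+1})=(d+1)!\int_a^b K_d(t)\,dt$, while directly $E_{Q_n}(q_{d+1})=\mu_{d+1}-Q_n(q_{d+1})=\mu_{d+1}$ by part (b); hence $\int_a^b K_d(t)\,dt=\mu_{d+1}/(d+1)!=c_n$ and the claimed formula follows. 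The delicate step, which I expect to be the main obstacle, is the application of the mean value theorem: it requires the Peano kernel $K_d$ to keep constant sign on $[a,b]$, a fact that must be established (it does hold for the classical families treated here) rather than taken for granted for an arbitrary interpolatory rule.
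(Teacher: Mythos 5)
The paper does not actually prove Theorem \ref{prop1}: it is imported verbatim from \cite{graca2}, so there is no in-text proof to compare against. That said, your arguments for (a) and (b) are correct and are exactly the mechanism the paper itself relies on when it assembles the fundamental system \eqref{sisf}: since $\phi_i(t_k)=0$ for $k\le i$ (Remark \ref{rem1}), the undetermined-coefficients equations on $B_0$ form an upper triangular system with nonzero diagonal entries $\phi_{i-1}(t_i)$ (the nodes being distinct), which gives existence, uniqueness and the back-substitution recursion \eqref{coef prop1}; and since every $q_j$ with $j\ge n$ vanishes at all the nodes, $Q_n(q_j)=0$, so exactness on ${\cal P}_m$ reduces, via the graded basis and linearity, to the vanishing of $\mu_n,\ldots,\mu_m$. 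Parts (a) and (b) are complete.

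Part (c) contains the genuine gap, and you have named it yourself. The Peano-kernel setup and the normalization step are fine: $q_{d+1}$ is monic of degree $d+1$, so $E_{Q_n}(q_{d+1})=\mu_{d+1}=(d+1)!\int_a^b K_d(t)\,dt$ correctly identifies the constant. But the passage to $E_{Q_n}(f)=f^{(d+1)}(\xi)\int_a^b K_d(t)\,dt$ via the mean value theorem for integrals requires $K_d$ to keep one sign on $[a,b]$, i.e.\ the rule to be definite, and this is not a formality: if $K_d$ changes sign, one can take $f\in C^{d+1}([a,b])$ with $f^{(d+1)}>0$ concentrated where $K_d$ has the sign opposite to that of $\int_a^b K_d(t)\,dt$, so that $E_{Q_n}(f)$ and $c_n f^{(d+1)}(\xi)$ have opposite signs and the claimed formula fails. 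So for an arbitrary interpolatory rule with preassigned nodes the statement needs a definiteness hypothesis (or a case-by-case verification for the families considered), and your proof of (c) remains incomplete as written; flagging the obstacle is the right instinct, but it does not close the argument.
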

 
 \noindent
For any polynomial $q_i$ of the basis   $B_{{\cal N}}$ one has $Q_n(q_i(t))=0, \forall \, i\geq n$. Consequently, the undetermined coefficients method applied to the basis $B_{{\cal N}}$  leads always to an overdetermined system of $n+1$  equations and $n$ unknowns,  of triangular type, which we call the {\em fundamental system} of the rule.  We denote this system by ${\cal F}={\cal F}_{{\cal N},{\cal B}}:=\{ F\,\omega=\tilde c\}$ emphasizing that the system is univocally determined by the set of nodes  ${\cal N}$ and its associated  basis ${\cal B}$. In matricial form,  the fundamental system $F\,\omega=\tilde c$  is given by    
 \begin{equation}\label{sisf}
F=
\left[
\begin{array}{ccccc}
1&1&1&\cdots&1\\
0&\phi_{1,2}&\phi_{1,3}&\cdots&\phi_{1,n}\\
0&0&\phi_{2,3}&\cdots&\phi_{2,n}\\
\vdots&\vdots&\vdots&\ddots&\vdots\\
0&0&0&\cdots&\phi_{n-1,n}\\
0&0&0&\cdots&0
\end{array}
\right]_{(n+1)\times n} \,\,  \text{and}\,\,
\quad
\tilde c=
\left[
\begin{array}{c}
\mu_0\\
\mu_1\\
\mu_2\\
\vdots\\
\mu_{n-1}\\
\mu_Q
\end{array}
\right]_{(n+1)\times 1}
\end{equation}
where $\mu_Q=I(q_{d+1}) $ and $\omega=[\omega_1,\ldots,\omega_n]^T$. The  entries in $F$ are $\phi_{i,j}=\phi_i(t_j)$, $i=1,\ldots,(n-1)$, $j=i+1,\ldots,n$. Since $\phi_0(t_1)=1$ and  $\phi_{i,i+1}=\phi_i(t_{i+1})= \phi_{i-1} (t_{i+1})\,(t_{i+1}-t_i)\neq 0$, $1\leq i \leq (n-1)$, the matrix $F$ has rank $n$,  and so  the recursive scheme \eqref{coef prop1} holds.

 %================================================== 
 \subsection{The  least\--squares solution of the fundamental system}\label{least}
 
The fundamental system \eqref{sisf}  can be written in the following form
 
 \begin{equation}\label{sisA}
 F\,\omega = \tilde c \Longleftrightarrow
 \left[
 \begin{array}{c}
 A\\
 \hline
 0 \ldots 0
 \end{array}
 \right] \, \omega
 =
 \left[
 \begin{array}{l}
 c\\
 \hline
\mu_{Q}
 \end{array}
 \right],
 \end{equation}
where $A$ is the (upper triangular) submatrix of $F$ obtained by deleting its last row.

  \noindent
It is easy to show that the solution $\omega=A^{-1}\, c$ is a least\--squares solution $\stackrel{\ast}{y}$ of the fundamental system \eqref{sisf}.  Indeed,   the least\--squares solution of \eqref{sisA} is the solution of the normal system  $F^T\, F\, y=F^T\, \tilde c$, and
 $$
  F^T\, F=
  \left[
  \begin{array}{c}
  A^T\,\,
  \vline
  \begin{array}{c}
  0\\
  \vdots\\
  0
  \end{array}
  \end{array}
  \right]_{n\times(n+1)}
.
  \left[
  \begin{array}{c}
  A\\
  \hline
  0\ldots 0
  \end{array}
  \right]_{(n+1)\times n}
  =A^T\,A, 
  $$
  and
   $$
 F^T\, \tilde c=
  \left[
  \begin{array}{c}
  A^T\,\, \vline
  \begin{array}{c}
  0\\
  \vdots\\
  0
  \end{array}
  \end{array}
  \right]_{n\times(n+1)}
  .
  \left[
  \begin{array}{c}
  c\\
  \hline
  \mu_Q
  \end{array}
  \right]_{(n+1)\times n}
  =A^T\,c .
  $$
Then, 
  \begin{equation}\label{A}
  F^T\,F y=F^T\, \tilde c\quad  \Longleftrightarrow  \quad A^T\, A y=A^T\,c .
   \end{equation}
    As the  system $A\, \omega=c$ has a unique solution, this solution coincides with its least-squares solution. So, from  \eqref{A} it follows that the weights's  vector $\omega$ coincides with the least\--squares solution of the fundamental system \eqref{sisA}.
  
 \medskip
  \noindent
Recall that  the $p$\--norm of  a vector $x\in \Rb^{n+1}$   is defined as $$||x||_p=(\sum_{i=1}^{n+1} |x_i|^p)^{1/p},\quad (p\geq 1).$$
  
  \noindent The limiting case is the norm $ ||x||_\infty$ $=max_{1\leq i\leq (n+1)}$ $|x_i|$.
  
  \medskip
 \noindent 
The next result gives an essential property of the residual vector at the least\--squares solution of the fundamental system ${\cal F}={\cal F_{{\cal N},{\cal B}}}$.

  \begin{prop}\label{propA}

  Consider the interpolatory rule \eqref{rule}  and  $F x=\tilde c$  its fundamental system  \eqref{sisA}.

  \noindent
  (a) The vector of  the weights $\omega$ of the rule is the least-squares solution of the fundamental system.

  \noindent
  (b)
  For any $p$\--norm,  the  residual vector at the least-squares solution, $r(\omega)= F\omega-\tilde c$,  satisfies the equalities
  \begin{equation}\label{pnorma}
  ||r(\omega)||_\infty=||r(\omega)||_p=|\mu_Q|, \quad p=1,2,\ldots
  \end{equation}
  \end{prop}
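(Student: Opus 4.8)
The plan is to exploit the block decomposition of the fundamental system displayed in \eqref{sisA}, in which $F$ consists of the invertible upper-triangular block $A$ stacked on top of a single zero row, while $\tilde c$ consists of the vector $c$ stacked on top of the lone entry $\mu_Q$. Both parts follow almost immediately from this structure, so the bulk of the work is organizing the observations already made in the text preceding the statement.

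For part (a), I would assemble the normal-equations computation carried out just before the proposition. The zero last row of $F$ contributes nothing to $F^T F$ and annihilates the $\mu_Q$ entry when forming $F^T \tilde c$, so the normal equations $F^T F\,y = F^T \tilde c$ collapse to $A^T A\,y = A^T c$. Because $A$ is upper triangular with diagonal entries $\phi_{i,i+1} = \phi_i(t_{i+1}) \neq 0$ (as noted after \eqref{sisf}), the matrix $A$ is invertible, hence $A^T A$ is invertible and the least-squares solution is the unique vector $y = A^{-1} c$. By Theorem~\ref{prop1}(a), equivalently by the backward-substitution scheme \eqref{coef prop1}, this is exactly the weight vector $\omega$, which proves (a).

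For part (b), the crux is a single explicit computation of the residual vector. Since $\omega = A^{-1}c$ solves the triangular subsystem, we have $A\omega = c$, and therefore
\[
F\omega = \begin{bmatrix} A\omega \\ 0 \end{bmatrix} = \begin{bmatrix} c \\ 0 \end{bmatrix}.
\]
Subtracting $\tilde c$ gives
\[
r(\omega) = F\omega - \tilde c = \begin{bmatrix} c \\ 0 \end{bmatrix} - \begin{bmatrix} c \\ \mu_Q \end{bmatrix} = \begin{bmatrix} 0 \\ -\mu_Q \end{bmatrix}.
\]
Thus the residual has every coordinate equal to zero except the last, which equals $-\mu_Q$.

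The conclusion \eqref{pnorma} is then immediate: a vector supported on a single coordinate with value $-\mu_Q$ has $p$-norm $(|\mu_Q|^p)^{1/p} = |\mu_Q|$ for every $p \geq 1$, and likewise $\infty$-norm $|\mu_Q|$, since the maximum of the absolute values of the entries is again $|\mu_Q|$. There is essentially no analytic obstacle in this argument; the one insight to isolate is that the triangular-plus-zero-row structure of $F$ forces the residual to collapse onto its last coordinate. Once this is recognized, all the norm equalities hold trivially because they are computed on a one-dimensional support, where every $p$-norm reduces to the single absolute value $|\mu_Q|$.
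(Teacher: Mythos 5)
Your proposal is correct and follows essentially the same route as the paper: part (a) is the normal-equations collapse $F^TF = A^TA$, $F^T\tilde c = A^Tc$ already carried out in the text before the proposition, and part (b) is the explicit computation showing $r(\omega)$ is supported on its last coordinate with value $-\mu_Q$, after which every $p$-norm and the $\infty$-norm trivially equal $|\mu_Q|$. No gaps.
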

  \begin{proof} It remains to show the equalities in \eqref{pnorma}. 
  Since
  $$
  r(\omega)=
  \left[
  \begin{array}{l}
  A\, \omega-c\\
  \hline
  -\mu_Q
  \end{array}
  \right]
  =
  \left[
  \begin{array}{c}
0\\
\vdots\\
0\\
  \hline
  -\mu_Q
  \end{array}
  \right],
  $$
  the result follows trivially by definition of  the $p$-norm.
  \end{proof}
      \begin{rem}\label{rem5}
  
  \noindent
   The $p$\--norms are monotone in the sense that $||x||_\infty\leq ||x||_1\leq  ||x||_2\leq  ||x||_3\leq \ldots $ (see \cite{cheney1} p. 43), thus the equalities observed in  \eqref{pnorma}  are not verified in general. As referred before, ${\cal F}$ is equivalent to the $(2\, n)\times n$ system \eqref{condA} which arises when the undetermined coefficient system is applied by brute force to the standard polynomial basis $<1,x,\ldots, x^{2\,n-1}>$. So, the $(n+1)$ equations in the fundamental system identify the relevant equations of the system  \eqref{condA}, capturing the essential characteristics of the particular quadrature rule, namely its degree. Furthermore its residual vector $r(\omega)$ satisfies the exceptional conditions given in Proposition \ref{propA}.
  \end{rem}

   %==================================================  
  \subsection{The   minimax solution of the fundamental system }\label{22}
 
It is well known that for a  general overdetermined linear system a minimax solution is not easy to compute. However,  when the system is a full rank system of  $n+1$ equations in $n$ unknowns  its minimax solution can be easily obtained from the  least-squares solution.  We start by stating  a result  in  \cite{cheney1}   which  relates    the minimax and  the least\--squares solutions of this type of systems.

\noindent
 In what follows,  given a system of $n+1$ equations in $n$ unknowns  $F\, x=\tilde c$, we denote  by $r(x)$  the residual vector  $r(x)=F\,x-\tilde c $.
 \begin{thm}\label{teoremaB} (\cite{cheney1}, p. 41)
Let $\stackrel{\ast}{x}$ be the least\--squares solution of a system of $n+1$ linear equations in $n$ unknowns: $r_i(x)=0, \,\, i=1,\ldots,(n+1)$. Assume that the system is of rank $n$. Then, the minimax solution is the exact solution of the system $r_i(x)=\sigma_i\, \epsilon,\,\, i=1,\ldots,(n+1)$, where $\sigma_i=\sign(r_i(\stackrel{\ast}{x}))$ and
\begin{equation}\label{eqB1}
\epsilon=\displaystyle{\frac{|| r(\stackrel{\ast}{x})    ||_2^{{}^2}}{|| r( \stackrel{\ast}{x}  ) ||_1}}= \displaystyle{\frac{\sum_{i=1}^{n+1} |r_i(\stackrel{\ast}{x})|^2}{\sum_{i=1}^{n+1} |r_i (\stackrel{\ast}{x}) |} }.
\end{equation}
 \end{thm}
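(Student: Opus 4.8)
The plan is to exploit the single feature that makes this problem tractable: since $F$ has rank $n$ but $n+1$ rows, the orthogonal complement of its range is a line. First I would fix a nonzero $u\in\Rb^{n+1}$ with $F^T u=0$, normalized so that $||u||_2=1$; it is determined up to sign. The crucial observation is that for every $x$ the pairing $u^T r(x)=u^T F x-u^T\tilde c=-u^T\tilde c$ does not depend on $x$. Thus all residual vectors lie on the hyperplane $H=\{v\in\Rb^{n+1}:u^T v=\alpha\}$ with $\alpha:=-u^T\tilde c$. Moreover the residual set $\{r(x)\}=-\tilde c+\operatorname{range}(F)$ is affine of dimension $n$ and is contained in $H$, which also has dimension $n$, so the two coincide: the residual set is exactly $H$. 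After possibly replacing $u$ by $-u$ I would arrange $\alpha\ge 0$.

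Next I would pin down the least\--squares residual. The normal equations $F^T r(\stackrel{\ast}{x})=0$ say precisely that $r(\stackrel{\ast}{x})$ is orthogonal to $\operatorname{range}(F)$, hence a scalar multiple of $u$; writing $r(\stackrel{\ast}{x})=\lambda u$ and pairing with $u$ gives $\lambda=u^T r(\stackrel{\ast}{x})=\alpha$. Therefore $r(\stackrel{\ast}{x})=\alpha\,u$, whence $||r(\stackrel{\ast}{x})||_2=\alpha$, $||r(\stackrel{\ast}{x})||_1=\alpha\,||u||_1$, and $\sigma_i=\sign(r_i(\stackrel{\ast}{x}))=\sign(u_i)$. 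This already yields the formula \eqref{eqB1}, because $||r(\stackrel{\ast}{x})||_2^2/||r(\stackrel{\ast}{x})||_1=\alpha^2/(\alpha\,||u||_1)=\alpha/||u||_1$, so I would set $\epsilon:=\alpha/||u||_1$ and prove that this is the optimal minimax value.

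The heart of the argument, and the step I expect to be the main obstacle, is the minimax claim, which rests on the $\ell_\infty$\--$\ell_1$ duality estimate. For any residual $v=r(x)$ the constraint $u^T v=\alpha$ together with H\"older's inequality gives $\alpha=u^T v\le ||u||_1\,||v||_\infty$, so $||r(x)||_\infty\ge\alpha/||u||_1=\epsilon$ for every $x$: this is the lower bound on the minimax value. To see it is attained I would exhibit the candidate residual $v^\ast$ with $v^\ast_i=\sigma_i\,\epsilon$. A direct check shows $u^T v^\ast=\epsilon\sum_i u_i\sign(u_i)=\epsilon\sum_i|u_i|=\epsilon\,||u||_1=\alpha$, so $v^\ast\in H$; since $H$ is exactly the residual set and $F$ is injective, there is a unique $x$ with $r(x)=v^\ast$. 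Because $||v^\ast||_\infty=\epsilon$ meets the lower bound, this $x$ is a minimax solution, and it is the exact solution of the system $r_i(x)=\sigma_i\,\epsilon$, as asserted.

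Finally I would address the one delicate point. If some coordinate $u_i$ vanishes, then that residual coordinate is unconstrained up to magnitude $\epsilon$, so the minimax solution need not be unique; the theorem singles out the particular solution obtained by setting each such coordinate to $\sigma_i\,\epsilon=0$, and the construction above still lands in $H$ and attains the bound, so the stated solution remains valid. In the generic case where all $u_i\ne 0$, equality in H\"older's inequality forces $|v_i|=\epsilon$ and $\sign(v_i)=\sign(u_i)$ for every $i$, which additionally gives uniqueness of the minimax solution.
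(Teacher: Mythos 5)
Your proof is correct. Note first that the paper itself offers no proof of this statement: it is imported verbatim from Cheney (the citation to p.~41 of \cite{cheney1}), so there is no internal argument to compare yours against. On its own terms, your argument is complete and is essentially the standard duality proof for the ``$n+1$ equations in $n$ unknowns'' case: the rank\--$n$ hypothesis makes $\ker(F^T)$ one\--dimensional, so the residual manifold is exactly the affine hyperplane $u^Tv=\alpha$; the normal equations force $r(\stackrel{\ast}{x})=\alpha\,u$; and H\"older's inequality $\alpha=u^Tv\le \|u\|_1\,\|v\|_\infty$ gives the sharp lower bound $\alpha/\|u\|_1$ on $\|r(x)\|_\infty$, attained precisely by the residual $v_i=\sign(u_i)\,\epsilon$, which lies on the hyperplane and hence is realized by a unique $x$. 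The identification of this optimal value with the quotient in \eqref{eqB1} then drops out of $r(\stackrel{\ast}{x})=\alpha u$ and $\|u\|_2=1$. Your closing remark on the coordinates with $u_i=0$ is the right caveat: there the minimax solution need not be unique, but the particular solution of $r_i(x)=\sigma_i\epsilon$ remains a valid minimizer under either convention for $\sign(0)$ (the paper later adopts $\sign(0)=1$, which changes $v^\ast_i$ from $0$ to $\epsilon$ but affects neither $u^Tv^\ast$ nor $\|v^\ast\|_\infty$). The only case you leave implicit is $\alpha=0$, where the system is consistent and \eqref{eqB1} reads $0/0$; this is harmless for the paper's application, since there the last residual component equals $\mu_Q\neq 0$.
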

 
 \noindent 
 We now apply  Theorem \ref{teoremaB}   to the fundamental system $F\, x=\tilde c$ given in \eqref{sisA}, in order to obtain its minimax solution. The components of the  residual vector at the least-squares solution $\omega$   are
 %  \begin{equation}\label{resid1}
 $$
 r_{n+1}( \omega)=\mu_Q,\qquad r_i( \omega)=0,\quad 1\leq i\leq n.
 $$
 % \end{equation}
Assuming the convention $\sign (0)=1$, we define the following vector of signs, 
$$
\sigma=[\sigma_1, \ldots,\sigma_n,  \sigma_{n+1}]^T=[1, \ldots, 1,  \sign(-\mu_Q)]^T= [1, \ldots, 1,  -\sign(\mu_Q)]^T.
$$
By  Theorem~\ref{teoremaB}  
  the minimax solution of the fundamental system is the (unique) solution of the system $r_i(z) =\sigma_i\, \epsilon$. That  is,  the solution of the  system
    \begin{equation}\label{sis4}
 \left\{
 \begin{array}{ cccccc}
z_1&+z_2&+\cdots&+z_n& -\, \epsilon&=\mu_0\\
 	&\phi_{1,2}\, z_2&+\cdots&+\phi_{1,n}\,z_n&-\, \epsilon &=\mu_1\\
		&                       &\ddots \quad \ddots &        &     &\\
	&                       &     &   \phi_{n-1,n}\, z_n&-\, \epsilon&=\mu_{n-1}\\
	&&&&   \sign (\mu_Q)\, \epsilon &=\mu_Q.
 \end{array}
 \right.
 \end{equation}
Solving for  $\epsilon$  the last equation of  \eqref{sis4},  we conclude that  the required  positive value $\epsilon$  is simply the magnitude of the moment $\mu_Q$, thus
%\begin{equation}\label{nova}
$$
  \epsilon=\sign (\mu_Q) \mu_Q=| \mu_Q|>0,
  $$
%\end{equation}
as it should be by   \eqref{eqB1}  in Theorem~\ref{teoremaB}. Substituting the value of $\epsilon$ in the first $n$ equations of  \eqref{sis4} we obtain the following (upper triangular) system:
$$ Az-|\mu_Q| v= c, \quad \text{where} \,\,  v=[1, \ldots, 1]^T\,\, \text{and} \,\, c=[\mu_0,  \ldots, \mu_{n-1}]^T.$$
\noindent
The minimax solution $\stackrel{\ast}{z}$ of the system $F\,x=\tilde c$ is then the unique solution of the system  $ A\,z-|\mu_Q| \, v= c$. So, $\stackrel{\ast}{z}$ satisfies
\begin{equation}\label{eqnova}
A\, \stackrel{\ast}{z}-|\mu_Q| \, v= c.
\end{equation}
On the other hand, the least-squares solution $\omega$ of the fundamental system satisfies  $A\, \omega= c$. Subtracting this  equation from \eqref{eqnova}, we obtain
$$
A\, (\stackrel{\ast}{z}-\omega) = |\mu_Q|\,  v\Longleftrightarrow A\, \tau= |\mu_Q|\,  v.
$$
We remark that  at the minimax solution  $\stackrel{\ast}{z}$ all the  residuals have the same magnitude $\epsilon=|\mu_Q|$, since by \eqref{eqnova} we have $r( \stackrel{\ast}{z})=\epsilon\, v$. So,  
$$
|| r( \stackrel{\ast}{z})||_\infty=|\mu_Q|\, ||v||_\infty=|\mu_Q|.
$$
Furthermore, by Proposition~\ref{propA}-(b) it follows that  $||r(w)||_p= |\mu_Q|$, for any $p$\--norm. Thus,  $||r(w)||_p= ||r( \stackrel{\ast}{z})||_\infty$.

\medskip   
\noindent   
   The next proposition summarizes what we have just proved.
   
   \begin{prop}\label{proposicao1}
The  fundamental system \eqref{sisA} has the following properties:

\noindent
(a)   The minimax solution  $\stackrel{\ast}{z}$   is equal to $\stackrel{\ast}{z}=\omega+\tau$, where $\omega$ is the  least-squares solution  of the system. The vector $\tau$  is the solution of the  following upper triangular system
    \begin{equation}\label{eqmini}
    \begin{array}{l}
A\, \tau= | \mu_Q| \, v, \quad \mbox{with}\,\, v=[1,\ldots,1 ]^T\,\in \Rb^{n},
\end{array}
   \end{equation}
   and $\mu_Q$ is the principal moment of the rule.

\noindent
   (b) The residual vectors at  the minimax and least\--squares solutions satisfy
   \begin{equation}\label{mag}
 |\mu_Q|=   || r(\omega)||_p=|| r( \stackrel{\ast}{z}) ||_\infty, \quad p\in\, \{ \infty, 1,2, \ldots      \}.
   \end{equation}
\end{prop}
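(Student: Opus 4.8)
The plan is to obtain both statements by specializing Theorem~\ref{teoremaB} to the fundamental system $F\,x=\tilde c$ of \eqref{sisA} and then reading off the residual structure already recorded in Proposition~\ref{propA}. Since the upper-triangular block $A$ of $F$ has nonzero diagonal entries $\phi_{i,i+1}\neq 0$, the matrix $F$ has rank $n$, so the hypotheses of Theorem~\ref{teoremaB} hold and the minimax solution $\stackrel{\ast}{z}$ is the unique exact solution of the perturbed system $r_i(z)=\sigma_i\,\epsilon$, $i=1,\ldots,(n+1)$.

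First I would identify the sign vector $\sigma$. By Proposition~\ref{propA}-(b) the residual at the least-squares solution $\omega$ has components $r_i(\omega)=0$ for $1\le i\le n$ and $r_{n+1}(\omega)=-\mu_Q$; under the convention $\sign(0)=1$ this gives $\sigma=[1,\ldots,1,-\sign(\mu_Q)]^T$. Substituting $\sigma$ into $r_i(z)=\sigma_i\,\epsilon$ produces the explicit triangular system \eqref{sis4}, whose last row reads $\sign(\mu_Q)\,\epsilon=\mu_Q$ and therefore forces $\epsilon=|\mu_Q|>0$. As a consistency check, this value agrees with the Cheney formula \eqref{eqB1}: at $\omega$ only the last residual is nonzero, so $||r(\omega)||_2^2/||r(\omega)||_1=\mu_Q^2/|\mu_Q|=|\mu_Q|$.

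To conclude part (a), I would substitute $\epsilon=|\mu_Q|$ into the first $n$ equations of \eqref{sis4}, which collapse to the upper-triangular system $A\,z-|\mu_Q|\,v=c$ with $v=[1,\ldots,1]^T\in\Rb^n$; its unique solution is $\stackrel{\ast}{z}$, giving \eqref{eqnova}. Subtracting the least-squares relation $A\,\omega=c$ then yields $A\,(\stackrel{\ast}{z}-\omega)=|\mu_Q|\,v$, so that $\tau:=\stackrel{\ast}{z}-\omega$ satisfies \eqref{eqmini} and is recovered by backward substitution. For part (b), equation \eqref{eqnova} shows directly that $r(\stackrel{\ast}{z})=|\mu_Q|\,v$, whence $||r(\stackrel{\ast}{z})||_\infty=|\mu_Q|$; combining this with the constant-norm identity $||r(\omega)||_p=|\mu_Q|$ of Proposition~\ref{propA}-(b) establishes \eqref{mag}.

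No real obstacle is anticipated, since the whole argument is a direct specialization rather than an estimate. The only point needing a moment of care is the self-consistency between the scalar $\epsilon$ produced by the last equation of \eqref{sis4} and the general value prescribed by \eqref{eqB1}; this matches immediately precisely because the least-squares residual is supported on a single coordinate.
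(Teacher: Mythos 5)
Your proposal is correct and follows essentially the same route as the paper: specialize Theorem~\ref{teoremaB} to the rank-$n$ fundamental system, read off the sign vector from the least-squares residual, solve the last equation of \eqref{sis4} for $\epsilon=|\mu_Q|$, and subtract $A\omega=c$ to get $A\tau=|\mu_Q|v$, with part (b) following from \eqref{eqnova} and Proposition~\ref{propA}-(b). (Incidentally, your reading $r_{n+1}(\omega)=-\mu_Q$ is the one consistent with $r(x)=Fx-\tilde c$ and with the sign vector $\sigma_{n+1}=-\sign(\mu_Q)$ actually used, whereas the paper's text momentarily writes $r_{n+1}(\omega)=\mu_Q$; the final conclusions are unaffected.)
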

\bigbreak

%================================================
\begin{exam} ({\bf Simpson's rule})\label{simp}
\medskip

\noindent Consider  the integral $I(f)=\int_{-1}^1 f(x) dx$.
The Simpson's rule is the  most celebrated quadrature rule which uses $n=3$  equally spaced nodes, say ${\cal N}=\{t_1,t_2,t_3\}=\{-1,0,1\}$.   So,
 $$
Q_3(f)=\omega_1\, f(-1)+\omega_2 \, f(0)+\omega_3\, f(1) .
$$ 
The polynomials defining the canonical basis ${\cal B}$ are:  $\phi_0(x)=1$, $\phi_1(x)=x+1$, $\phi_{2}(x)=(x+1)\, x$, 
$q_3(x)=\phi_2 (x)\, (x-1)=(x+1)\,x\,(x-1)$ and $q_4(x)=q_3(x)\, (x+1)= (x+1)^2\, x\, (x-1)$. The respective moments  are
$\mu_0=\int_{-1}^1 \phi_0(x) dx=2$, $\mu_1=\int_{-1}^1 \phi_1(x) dx=2$, $\mu_2=\int_{-1}^1 \phi_2(x) dx=2/3$, $\mu_3=\int_{-1}^1 q_3(x) dx=0$, $\mu_Q=\int_{-1}^1 q_4(x) dx=-4/15$.  

\noindent
Since $\mu_Q$ is the first non null moment with index greater than $3$, it follows  from Theorem~\ref{prop1}-(b) that   this rule has degree $d=deg(Q_3(f))=3$.
The fundamental  system   $F\, \omega=\tilde c$  for  Simpson's rule is
% \begin{equation}\label{f1}
$$
F\, \omega=\tilde c \Longleftrightarrow\left[
\begin{array}{ccc}
1&1&1\\
0&1&2\\
0&0&2\\
0&0&0
\end{array}
\right]
\left[
\begin{array}{c}
\omega_1\\
\omega_2\\
\omega_3
\end{array}
\right]
=
\left[
\begin{array}{c}
2\\
2\\
2/3\\
-4/15
\end{array}
\right].
$$
%\end{equation}
The weights are easily computed (recursively) by solving   the triangular subsystem $A\, \omega=c$:
%\begin{equation}\label{nova1}
$$
 \begin{array}{l}
 \omega_3=1/3\\
 \omega_2=2-2\, \omega_3=4/3\\
 \omega_1=2- (\omega_2+\omega_3)=1/3 .
 \end{array}
 $$
%\end{equation}
 Thus,
 $$
 Q_3(f)=\displaystyle{ \frac{1}{3}  } [f(-1)+4 f(0)+f(1)] .
 $$
 Of course, by changing  the domain of integration from $[-1,1]$ to $[a, b]$ one gets the traditional form of this rule: $Q_3(f)=h/3\, [f(a)+4 \,f[(a+b)/2)+ f(b)]$, where $h=(b-a)/2$. 
  
 \medskip
 \noindent One can easily confirm the result of Proposition~\ref{prop1}-(a)   by solving the normal system $F^T\, F\, y=F^T\, \tilde c$, that is, 
 $$
 \left[
\begin{array}{ccc}
1&1&1\\
1&2&3\\
1&3&9
\end{array}
\right]
\left[
\begin{array}{c}
y_1\\
y_2\\
y_3
\end{array}
\right]
=
\left[
\begin{array}{c}
2\\
4\\
22/3
\end{array}
\right].
$$  Let us  now compute the minimax solution from the least-squares solution $\omega=[1/3,4/3,1/3]^T$.  By  Proposition~\ref{proposicao1}\--(a), the minimax solution  $\stackrel{\ast}{z}$ of the fundamental system  satisfies   $\stackrel{\ast}{z}=\omega+\tau$, where $\tau$ is the solution of  $A\,\tau=|\mu_4| \,v$. So, the vector $\tau$ is the solution of the system
  $$
A\tau=\left|-\frac{4}{15}\right| v\quad  \Longleftrightarrow \quad  \left[
\begin{array}{ccc}
1&1&1\\
0&1&2\\
0&0&2\\
\end{array}
\right]\,
 \left[
\begin{array}{c}
\tau_1\\
\tau_2\\
\tau_3
\end{array}
\right] =
 \left[
\begin{array}{c}
4/15\\
4/15\\
4/15
\end{array}
\right],
$$
giving $\tau=[ 2/15,0,2/15  ]^T$.
Thus,  the minimax solution is
$
\stackrel{\ast}{z}=\omega+\tau=[3/15,4/3,3/15]^T 
$.
One can easily confirm the result of Proposition~\ref{proposicao1}\--(b)  on the residual norm:
$$
|| r(\stackrel{\ast}{z})||_\infty=|\mu_4| \, || v ||_\infty =|\mu_4|=4/15=||r(\omega)||_\infty.
$$
\end{exam}
\noindent The Simpson's rule angle, that is the angle between its least\--squares and minimax solutions,  is 
$$
ang( \stackrel{\ast}{z}, \omega       )=  
arccos(    \displaystyle{               \frac{43  }{3\, \sqrt{209}}     }) \times \frac{180}{\pi}\simeq 7.5 \quad \mbox{degrees}.
$$
In paragraph \ref{exemplo1}  we pursue with  other closed  Newton\--Cotes rules with a greater number of nodes.

%=================================================================================
 
  \section{Some relevant  parameters}\label{parametros}
  
  \noindent The properties of the least-squares  and the minimax  solutions   of the fundamental system  ${\cal F}_{{\cal N}}$ of an interpolatory rule $Q_n(f)$, discussed in the previous section,  suggest  the study of several features of a quadrature rule by considering  the behavior of certain  parameters  associated to the fundamental system.  The first  question is  know what  are the suitable parameters  for previewing  the accuracy of a given rule,   its  convergence,  or  even to compare  distinct rules for the same integrand  and weight  functions.

\medskip
  \noindent
We define next  a few relevant  parameters associated to  a fundamental  system ${\cal F}_{{\cal N}}$ and we justify their relevance. In paragraph \ref{examples} we compute these parameters for a certain number of nodes using some classical quadrature rules. To which extend these parameters classify a quadrature rule (or distinct rules), and the quest for other useful parameters  within this least\--squares/minimax setup, will deserve further studies.  

\medskip
\noindent 
To emphasize the dependence on $n$ whenever needed we index by $n$  the quantities.
 As shown  in Propositions \ref{propA} and   \ref{proposicao1}-(b), for each $n$, we have
\begin{equation}\label{heuristic}
 |\mu_{Q_n}^{(n)}|= || r(\omega_n)||_\infty=||r(\omega_n)||_p=  || r(\stackrel{\ast}{z_n})||_\infty, \quad  p\geq 1.
 \end{equation}
 These  equalities reveal that the \lq\lq{quality}\rq\rq of the least-squares and minimax solutions is measured by the principal moment of the rule. In fact,   the smaller is the principal moment the lesser  is the magnitude of the residual vectors $ r(\omega_n)$ and  $ r(\stackrel{\ast}{z_n})$ at the least-squares and minimax solutions respectively.

\noindent It is well known that an interpolatory rule is convergent if and only if $\omega_n$ satisfies the following  boundness condition (\cite{berezin}, p. 267 or  \cite{krylov}, Ch. 12)
\begin{equation}\label{bounded}
 ||   \omega_n  ||_1\leq K, \quad n=1,2,\ldots
\end{equation}
 This result suggests that in addition to  the magnitude   $|\mu_{Q_n}|$ one should also consider   the following norm parameters
\begin{equation}\label{norma}
  N_{\omega_n}=||\omega_n||_1 \quad \mbox{and}\quad N_{{\stackrel{\ast}{z_n}}}=||\stackrel{\ast}{z_n}||_1.
\end{equation}
Of course, in \eqref{norma} one can consider any other $p$-norm due to the equivalence of norms in $\Rb^n$.
 
\medskip 
 \noindent
 In Figure \ref{FiguraA} we  illustrate the practical relevance of the   parameters $N_{\omega_n}$ and $N_{{\stackrel{\ast}{z_n}}}$   for the Newton-Cotes  and the Gauss-Legendre rules,  with the   number of nodes varying from $n=2$ to $n=15$.   In the  graphics  of Figure~\ref{FiguraA}    the thicker polygonal line connects the  values of $N_{\omega_n}$ (represented by the dots) whereas the lighter  line connects the   values of   $N_{{    \stackrel{\ast}{z_n}          }}$. It is clear from the respective graphics that for the Newton-Cotes rules the values of $N_{\omega_n}$ and $N_{{\stackrel{\ast}{z_n}}}$ do not decrease for  $n\geq 9$,  and so this rule cannot be convergent.  We shall also remark that the  minimax parameter $N_{{\stackrel{\ast}{z_n}}}$ attains a minimum  at $n=9$. This phenomenon is certainly  related to the occurrence of  negative weights for $n\geq 9$ in the Newton-Cotes rules (\cite{atkinson}, p. 269),  however a detailed study of this phenomenon  is out of the scope of this work.
  \begin{figure}[h]
\begin{center} 
 \includegraphics[totalheight=4cm]{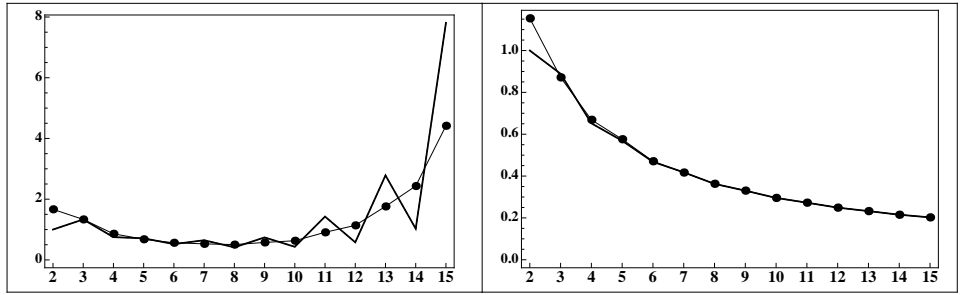}
\caption{\label{FiguraA}  Comparison of  the norm parameters  $N_{\omega_n}$  and $N_{{\stackrel{\ast}{z_n}}}$ (polygonal line)  for Newton-Cotes (left) and Gauss\--Legendre rules (right),   with $2\leq n\leq 15$.}
\end{center}
\end{figure}

%\medskip
\noindent
If a rule $Q_n(f)$ is convergent,   that is $Q_n(f) \setan \int_{a}^b f(t)\, w(t)\, dt$, then by the error formula \eqref{error prop} in Theorem~\ref{prop1}, we have $|\mu_{Q_n}| \, \setan \, 0$. So,  by \eqref{heuristic} we get
    $$
||\omega_n||_\infty=|| r(\omega_n)||_p\,\,  \setan\, 0\quad\text{and}\quad 
    || r({\stackrel{\ast}{z_n}})||_\infty\,\,  \setan \,0.
    $$
Then, in the case of a convergent rule, one should also have
     $$
 \tau_{n}  =   \omega_n-  \stackrel{\ast}{z_n}\,\, \setan\,\, 0\quad   \Leftrightarrow\quad \stackrel{\ast}{z_n}\,\,\setan \,\, \omega_n.
$$
One way of measuring the closeness of $\omega_n$ and   $\stackrel{\ast}{z_n}$ is through what we call the {\it  angle of a rule} $Q_n(f)$, which we define as
  the angle between the minimax solution $\stackrel{\ast}{z_n}$  and least\--squares solution $\omega_n$:
\begin{equation}\label{angulo}
ang( \stackrel{\ast}{z_n}, \omega_n       )= \arccos\left(\displaystyle{\frac{| < \stackrel{\ast}{z_n}, \omega_n   >   |}{||  \stackrel{\ast}{z_n} ||_2 \,  ||  \omega_n   ||_2}}\right),
\end{equation}
where $<,>$ denotes the standard inner product in $\Rb^n$. From what we said before,  a rule whose angle does not decreases from a certain number of nodes onwards cannot be a convergent rule.  This is the case of Newton\--Cotes rules (see paragraph \ref{exemplo1}).  
 
 \medskip
 \noindent
   In Table \ref{tablerunge}  
      we compare the values of some  parameters  for Newton-Cotes (NC), Clenshaw-Curtis (CC), Fej\'er (F) and Gauss-Legendre (GL) rules,  for  $n=17$ nodes and the weight function $w(x)=1$. These rules are discussed  in more detail in Section \ref{examples}. The symbol $d_{17}$ denotes the degree of the respective rule, and  the error coefficient  $\alpha_{17}$,  taking into account \eqref{error prop},  is defined in \eqref{alpha1}. The computations have been carried out in a laptop using   the {\sl Mathematica} system and  double  precision arithmetic. 
      
 \begin{table}[h!]
   $$
   \begin{array}{| c | c |  c  |  c   | c|}
   \hline
 \mbox{ Parameters}  &  \mbox{NC}  & \mbox{F} & \mbox{CC} & \mbox{GL}\\
   \hline
d_{17}& 17&  17     &   17       &      33\\
  \hline
\mu_{Q_{17}}& -0.000129&   -1.07\times 10^{-7} &   1.26\times 10^{-8}        &1.80\times 10^{-10}\\
\hline
\alpha_{17}=\displaystyle{\frac{  \mu_{Q_{17}}  }{d_{17}+1}}&  -1.76\times 10^{-20}   & -1.67\times 10^{-23}   & 1.97\times 10^{-24}     &6.11\times 10^{-49}\\
\hline
ang(\stackrel{\ast}{z},\omega) &  4.55    &0.00711&  0.0380   & 0.000154\\
\hline
  \end{array}
   $$
   \caption{\label{tablerunge}  Global comparison for NC, F, CC, GL,  with $n=17$ nodes.}
\end{table}
 
\noindent
In Table \ref{tablerunge}, there is a remarkable  evidence for the  angle of a rule  to be a good measure for its convergence. In fact, Table~\ref{tablerunge}  shows   that the angle of the last three rules is close to zero while the angle for the Newton\--Cotes rule is far from zero. This means  that the  minimax and the least\--squares solutions are asymptotically aligned  for the  F, CC, and GL rules,  whereas this does not happen for  NC. This probably explains why  in spite of  its low degree of exactness, the Fej\'er and Clenshaw\--Curtis rules have asymptotically  comparable accuracies to the Gaussian rules.  This surprising fact has attracted the attention of many authors, in particular after  the  paper  \cite{trefethen}  of  Lloyd N. Trefethen,  where it is shown that the CC and GL rules have close errors for  sufficiently large $n$.  The angular values in Table~\ref{tablerunge} indicate that the angle of a rule   deserves to be considered  as (new) parameter  useful to explain  the previously referred phenomenon which holds for several rules:    in spite of  having  very different  degrees of exactness asymptotically  the rules  exhibit  almost the same accuracy.  
 
 \medskip
\noindent
 For sufficiently smooth integrand functions $f$, Theorem~\ref{prop1}-(c) gives the formula for the error in terms of the principal moment and of the degree of the rule. This justifies to consider the following parameter which we call here the  {\em coefficient of error}.     We define the   coefficient of error of a rule $Q_n(f)$, of degree $d$, as 
   \begin{equation}\label{alpha1}
  \alpha_{Q_n}=\displaystyle{ \frac{\mu_{Q_n}}{(d(Q_n)+1)!}   },
  \end{equation}
where $\mu_{Q_n}$ denotes the principal moment of the rule.
 We say that the rule $Q_n$ is {\em better} than the rule $R_n$, if $\alpha_{Q_n}<\alpha_{R_n}$.

  \begin{rem}  Another reason for not using  unbounded rules (i.e.  those for which   \eqref{bounded} does not hold),   such as the Newton-Cotes rules,  is of computational nature. When  the   values $f(t_i), \,\, 1\leq i\leq n$,  have an  error, say $e_f$,  it is easy to prove that the error $E_{Q_n}$ in the rule  satisfies
  $$
  || E_{Q_n}||\leq ||\omega||\, || e_f||,
  $$
  and so, one prefers  a rule with a small norm  $||\omega||$.
  \end{rem}
  
  \medskip
  \noindent
 For the sake of completeness,  in the following proposition we relate the minimax and the least\--squares solutions  with the condition number of the (triangular) submatrix $A$ of the fundamental system.
  
 \begin{prop}\label{proposicao2}
 
\noindent
Let $\omega$ and  $\stackrel{\ast}{z}$ be respectively the least\--squares and the minimax solutions of the fundamental system \eqref{sisA} of the rule $Q_n(f)$, and $\mu_{Q}$ its principal moment.  Then, 
\begin{itemize}
\item[(a)] 
\begin{equation} \label{mini2a}
|\mu_{Q}| < \Omega, \quad \text{with}\quad \Omega=\frac{\|A\|_1\, \| \omega - \stackrel{\ast}{z}\|_1}{\sqrt{n}}.
\end{equation}
 
   \item[(b)] The parameter  $\Gamma=\displaystyle{ \frac{|| \stackrel{\ast}{z}-\omega||_\infty\|A\|_\infty}{|\mu_Q|} }$ satisfies
     \begin{equation}\label{mini2c}
1 \leq\Gamma \leq  cond_\infty(A),
   \end{equation}
   where $cond_\infty(A)=||A||_\infty\,\, ||A^{-1}||_\infty$ is the condition number of the matrix $A$ for the subordinate $\infty$-norm.
   \end{itemize}
   \end{prop}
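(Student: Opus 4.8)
The plan is to derive everything from the single vector identity furnished by Proposition~\ref{proposicao1}-(a): writing $\tau=\stackrel{\ast}{z}-\omega$, the correction $\tau$ is the exact solution of the triangular system $A\,\tau=|\mu_Q|\,v$, where $v=[1,\ldots,1]^T\in\Rb^n$. Both estimates then follow by reading this identity through the $1$-, $2$- and $\infty$-norms and applying the subordinate bound $\|A\,x\|\le\|A\|\,\|x\|$; no further structural input about the rule is needed.

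For part (a) I would first evaluate the image vector $A\tau=|\mu_Q|\,v$ explicitly in two norms. Since all $n$ entries of $v$ equal $1$, one has $\|A\tau\|_2=|\mu_Q|\sqrt{n}$ and $\|A\tau\|_1=|\mu_Q|\,n$, whence $\|A\tau\|_2<\|A\tau\|_1$ because $\sqrt{n}<n$ for $n\ge 2$ and $\mu_Q\neq 0$ (being the principal moment). Chaining this with submultiplicativity, $\|A\tau\|_1\le\|A\|_1\,\|\tau\|_1$, gives $|\mu_Q|\sqrt{n}=\|A\tau\|_2<\|A\tau\|_1\le\|A\|_1\,\|\tau\|_1$, and dividing by $\sqrt{n}$ yields the strict inequality $|\mu_Q|<\Omega$ with $\Omega=\|A\|_1\,\|\tau\|_1/\sqrt{n}$.

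For part (b) I would read the same identity in the $\infty$-norm. The lower bound is immediate: since $\|v\|_\infty=1$, we have $|\mu_Q|=\|A\tau\|_\infty\le\|A\|_\infty\,\|\tau\|_\infty$, which rearranges to $\Gamma=\|\tau\|_\infty\,\|A\|_\infty/|\mu_Q|\ge 1$. For the upper bound I would invert the triangular system, $\tau=|\mu_Q|\,A^{-1}v$, so that $\|\tau\|_\infty\le|\mu_Q|\,\|A^{-1}\|_\infty\,\|v\|_\infty=|\mu_Q|\,\|A^{-1}\|_\infty$; substituting this into the definition of $\Gamma$ cancels $|\mu_Q|$ and leaves $\Gamma\le\|A\|_\infty\,\|A^{-1}\|_\infty=cond_\infty(A)$, completing the sandwich $1\le\Gamma\le cond_\infty(A)$.

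I do not expect a genuine obstacle here: once the identity $A\tau=|\mu_Q|v$ is in hand, all four inequalities are one-line consequences of norm submultiplicativity. The only point that requires care is the strictness in (a), which is \emph{not} produced by the submultiplicative bound (that step is merely non-strict) but by the gap $\|A\tau\|_2<\|A\tau\|_1$, equivalently by $\sqrt{n}<n$. I would therefore state the hypothesis $n\ge 2$ explicitly, noting that in the degenerate case $n=1$ one only obtains the equality $|\mu_Q|=\Omega$.
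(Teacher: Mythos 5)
Your proposal is correct and follows essentially the same route as the paper: both rest entirely on the identity $A\,\tau=|\mu_Q|\,v$ from Proposition~\ref{proposicao1}, reading it through the $1$-, $2$- and $\infty$-norms together with the subordinate bound $\|Ax\|\le\|A\|\,\|x\|$ and, for the upper bound in (b), the inversion $\tau=|\mu_Q|\,A^{-1}v$. If anything you are slightly more careful than the paper, whose displayed step $\sqrt{n}\,|\mu_Q|\le\|A\|_1\,\|\omega-\stackrel{\ast}{z}\|_1$ only yields the non-strict inequality $|\mu_Q|\le\Omega$; your observation that strictness comes from $\sqrt{n}<n$ (hence requires $n\ge 2$ and $\mu_Q\neq 0$) fills that small gap.
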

    
     \begin{proof} From \eqref{eqmini} in Proposition~\ref{proposicao1},  we have 
\begin{equation}\label{nnova} A\,  (\stackrel{\ast}{z}-\omega) = |\mu_Q|\, v,
\end{equation} 
     where $v$ is the vector $v=[1,\ldots, 1]^T\,\in \Rb^n$. Then, 
 $$\sqrt{n}\, |\mu_{Q}|\leq \|A\|_1\,  \| \omega - \stackrel{\ast}{z}\|_1.
 $$
For (b), again by \eqref{nnova}, we have $\tau=\stackrel{\ast}{z}-\omega= |\mu_Q|\,  A^{-1} v$, and so
$$
\|\tau\|_\infty\leq \|A^{-1}\|_\infty \,  |\mu_Q|\qquad\text{and}\qquad
|\mu_Q| \leq \|A\|_\infty\, \|\tau\|_\infty.
$$
The result  follows from these inequalities.
   \end{proof}
\begin{rem}\label{rem2A}
In the inequalities \eqref{mini2a} and \eqref{mini2c} the influence of the nodes  is present  through  the matrix $A$, while  the influence of the functions $f(x)$ and   $w(x)$ is mainly captured by the principal moment of the rule.  The parameter $\Gamma$  in \eqref{mini2c} is  useful to control numerical instabilities in the computation of the least\--squares and  minimax solutions of the fundamental system, a system which is in general ill\--conditioned  due to the fact of the matrix $A$ being in general    almost singular for large $n$. These issues are however out of scope of the present work.
 
\end{rem}

%==================================================================================================================
\subsection{Worked examples }\label{examples}

In this section we   compare some of  the parameters defined in the previous section for the Newton-Cotes (NC), Fej\'er (F), Clenshaw-Curtis (CC) and Gauss-Legendre (GL) rules.
 All the computations were carried out in a laptop  using the system {\sl Mathematica} and machine precision.

 \subsubsection{Newton\--Cotes rules}\label{exemplo1}
 
 \noindent
Previously (see Figure~\ref{FiguraA} and Table~\ref{tablerunge}) the  parameters $N_{\omega_n}$ and $N_{\stackrel{\ast}{z_n}}$ were tested  for the Newton-Cotes rules.  Here we compare  the angle of these rules as well as  the   norm $\|\tau_n\|_\infty$ (recall that $\tau= \omega_n-\stackrel{\ast}{z_n}$), for  a number of nodes $2\leq n\leq 17$.
\begin{figure}[h]
\begin{center} 

  \includegraphics[totalheight=4.0cm]{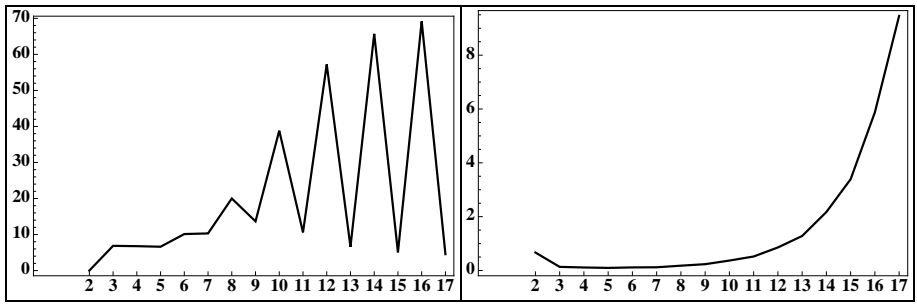}
\caption{\label{figang1}   Newton\--Cotes rules:  on the left the angle $ ang(  \stackrel{\ast}{z_n}, \omega)$, expressed in degrees,  and on the right  the distance  $||\tau||_\infty=|| \stackrel{\ast}{z_n} -\omega_n||_\infty$.}
\end{center}
\end{figure}
 
 \noindent
The graphics in Figure \ref{figang1} show that the angle between the minimax and least\--squares solutions, $ ang(  \stackrel{\ast}{z_n}, \omega_n)$),  has  small variation for $n\leq 8$ but suddenly oscillates widely. This remarkable  behavior  is closely related to  the undesirable numerical properties of the   Newton-Cotes rules   due to the occurrence of negative weights for $n\geq 9$.   Moreover, from $n=9$ onwards the distance $||\tau||_\infty$, between the minimax solution $ \stackrel{\ast}{z_n}$ and the least\--squares solution $\omega$,  increases and so the rule cannot be convergent.

 %========================================================================================
\subsubsection{Clenshaw\--Curtis rules}\label{scc}

The Clenshaw\--Curtis (CC) \cite{clenshaw1}  rules    are defined in $[-1,1] $, being  the points $\pm 1$ fixed nodes.  For $n\geq 1$,  we consider the  \lq\lq{practical}\rq\rq abscissas (see  \cite{davis}, p. 86 ) defined by
$$
t_k=\cos \left(\frac{k \pi}{n}\right), \quad k=0,\ldots,n.
$$
The total number of nodes is $N=n+1$.  Let us briefly recall our least-squares/ minimax  approach giving the calculations for this rule when $n=3$. First, we sort the nodes in increasing order to obtain the canonical set ${\cal N}$, that is,  we take $t_1=-1$, $t_2=-1/2$, $t_3=1/2$ and $t_4=1$. The respective triangular system is
  \begin{equation}\label{f1}
\left[
\begin{array}{cccc}
1&1&1&1\\
0&1/2&1/2&2\\
0&0&3/2&3\\
0&0&0&3/2
\end{array}
\right]
\left[
\begin{array}{c}
\omega_1\\
\omega_2\\
\omega_3\\
\omega_4\\
\end{array}
\right]
=
\left[
\begin{array}{c}
2\\
2\\
5/3\\
1/6\\
\end{array}
\right] 
\end{equation}
and  $\omega=[1/9,8/9,8/9,1/9]^T$ its solution. So, the rule is
 $$
 CC_4(f)=\displaystyle{ \frac{1}{9}  } [f(-1)+8 f(- 1/2)+ 8\,f(1/2)+f (1)] .
 $$
 As the principal moment is  $\mu_{CC_4}=1/15\neq 0$ this rule has degree $d=3$, like  Simpson's rule. However,  this rule may be considered \lq\lq{better}\rq\rq   than Simpson's rule  since its moment is smaller ($\mu_{CC_4}=1/15< \mu_{S_4}=4/15$).
 
   \begin{figure}[h]
\begin{center} 
 \includegraphics[totalheight=4.0cm]{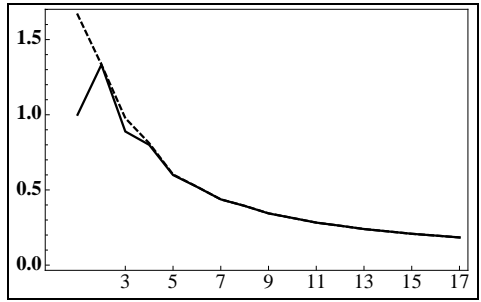}
\caption{\label{fig4} Clenshaw-Curtis rules:  the dashed line joins the values $|| \stackrel{\ast}{z_n}||_\infty$ and the other line the values  $|| \omega_n||_\infty$.}
\end{center}
\end{figure}

 \noindent
 In Figure \ref{fig4} we display   the values of the  $\infty$-norm of the least\--squares solution versus the minimax  for the referred CC rules. Here we observe an interes\-ting behavior  when compa\-red with the Newt\-on\--Cotes case:   the magnitude of the solutions  $\omega_n$ and  $\stackrel{\ast}{z_n}$ become  closer as $n$ increases. This suggests that the Clenshaw\--Curtis rules are convergent, as in fact they are. 
 \begin{figure}[h]
\begin{center} 
 \includegraphics[totalheight=4.0cm]{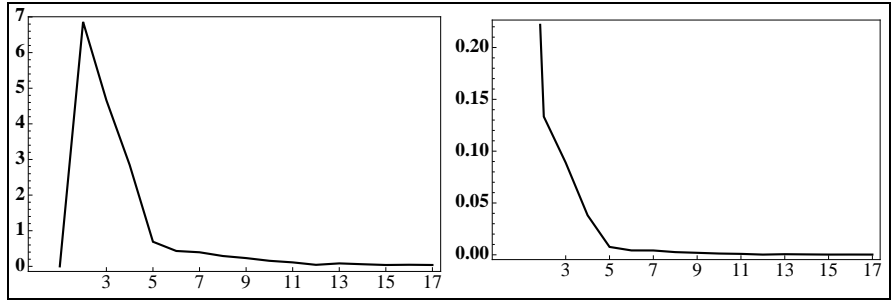}
\caption{\label{angcc}  Clenshaw-Curtis rules:  on the left the angle $ ang(  \stackrel{\ast}{z_n}, \omega_n)$ and on the right  $||\tau||_\infty=|| \stackrel{\ast}{z_n} -\omega_n||_\infty$.}
\end{center}
\end{figure}

\noindent
The  Figure \ref{angcc}  shows values of the angle of the CC rules  as well the distance (in the maximum norm) between the least-squares and minimax  solutions. It is  observable an enormous reduction and smoothing in the referred angle in comparison with  the Newton-Cotes case. Here, as the value of   $n$ increases the angle reduces almost to zero,  suggesting that   both the solutions  $\omega_n$ and $\stackrel{\ast}{z_n}$ are asymptotically coincident.
%========================================================================================

%\newpage
 \subsubsection{Fej\'er rules}\label{fejerr}
 The Fej\'er first rule \cite{fejer} with $n$ nodes uses as abscissas $t_k=\cos(\frac{2\,k-1}{2\,n}\,\pi),\,\,k=1,\ldots,n$, which are the zeros of the Chebyshev polynomial $T_n(x)=$ $\cos(n\, $ $\arccos\, x)$, in $[-1,1]$  (\cite{bjorck1}, p. 539).  For $n=3$, the nodes are $t_1=\sqrt3/2$, $t_2=0$ and $t_3=\sqrt3/2$. The  moments are $\mu_0=2, \mu_1=\sqrt{3}, \mu_2=2/3$, and 
\begin{align*}\mu_3&=\int_{-1}^1 (t+\sqrt{3}/2)\,t\, (t-\sqrt{3}/2)\, dt=0\\
 \mu_4&=\int_{-1}^1 (t+\sqrt{3}/2)^2\,t\, (t-\sqrt{3}/2)\, dt=-1/10.
 \end{align*}
 The solution of the rule's  fundamental system is $[4/9,10/9,4/9]^T$. Its degree  is obviously $d=3$, like in the Simpson's rule. As the principal moment of this rule,   $|\mu_F|=|\mu_4|=1/10$, is smaller in modulus than the principal moment of the Simpson's rule, we can say that Fej\'er's rule is better than Simpson's rule.
  \begin{figure}[h]
\begin{center} 
 \includegraphics[totalheight=4.0cm]{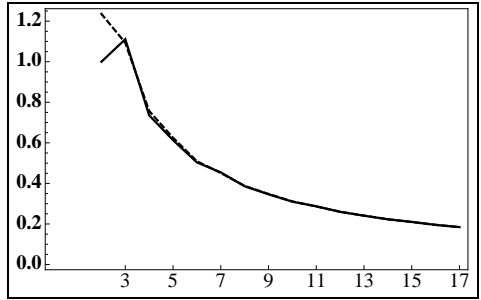}
\caption{\label{fig2}   Fej\'er rules:  the dashed line connects the values of $|| \stackrel{\ast}{z_n}||_\infty$ and the other line the values of   $|| \omega_n||_\infty$.}
\end{center}
\end{figure}

\noindent
  In Figure \ref{fig2} we compare  the norms  $|| \omega_n||_\infty$  and  $ ||\stackrel{\ast}{z}||_\infty$, for  Fej\'er's  rules. The computed values  suggest that  Fej\'er's rule is convergent and the respective weights are all positive.  Furthermore, contrarily to NC case,  as  $|| \omega_n||_\infty$ $\longrightarrow$ $ ||\stackrel{\ast}{z}||_\infty$  one  infers the convergence of Fej\'er's rule. Both   $|| \omega_n||_\infty$   and  $ ||\stackrel{\ast}{z_n}||_\infty$ are much smaller than their Newton-Cotes counterparts,   showing the superiority of a Fej\'er rule when compared with a Newton-Cotes rule with the same number of nodes (see also Figure \ref{FiguraA}). 
    \begin{figure}[h]
\begin{center} 
 \includegraphics[totalheight=4.0cm]{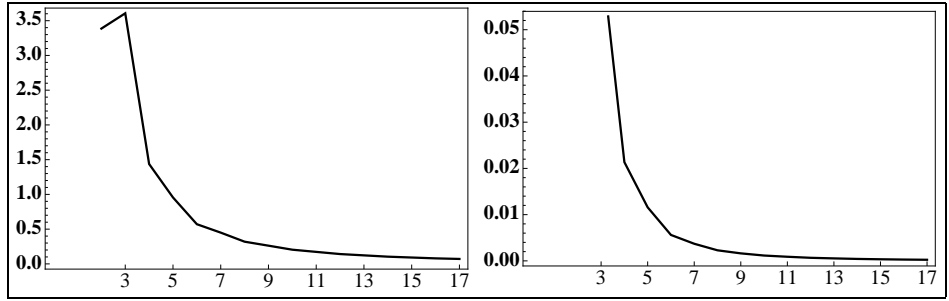}
\caption{\label{angfejer} Fej\'er rules: on the left the angle $ ang(  \stackrel{\ast}{z_n}, \omega_n)$ and on the right  $||\tau||_\infty=|| \stackrel{\ast}{z_n} -\omega_n||_\infty$.}
\end{center}
\end{figure}

\noindent
The Figure~\ref{angfejer} displays the angle  $ ang(  \stackrel{\ast}{z_n}, \omega_n)$, for the Fej\'er rules and the values for the respective least\--squares/minimax  distance  $||\tau||_\infty$. Both parameters  approach zero, as the number of nodes increases, more rapidly than the CC rules.
 
%===========================================================================================
\subsubsection{Gauss\--Legendre rules}\label{glr} 

The Gauss\--Legendre rule (GL) \cite{gauss} uses as  nodes the zeros of the $n$ Legendre polynomial $P_n(x)$ in $[-1,1]$  (see for instance \cite{atkinson}, \cite{kress}).  Gaussian rules are widely used in numerical quadrature  in part due to their  maximal degree $d=2\, n-1$  of exactness  (see \cite{gautschi3} for a survey on Gaussian rules).

 For $2\leq n\leq 17$, we compare in Figure \ref{fig3} the norms of the respective least\--squares and minimax solutions, and in Figure~\ref{anggauss}  are displayed the respective angle values and  distances $||\tau_n||_\infty$.  It can be observed that both solutions  $\omega_n$ and $\stackrel{\ast}{z_n}$ become very close for $n\geq 5$,  and their distances  approach zero more rapidly than in all the previous examples. This is of course a confirmation of  the Gaussian rules excellence.
 \begin{figure}[ht]
\begin{center} 
 \includegraphics[totalheight=4.0cm]{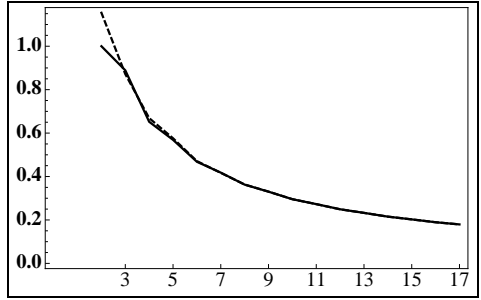}
\caption{\label{fig3} Gauss\--Legendre rules: the solid line represents   $|| \omega_n||_\infty$  the other  $|| \stackrel{\ast}{z_n}||_\infty$.}
\end{center}
\end{figure}
 \begin{figure}[ht]
\begin{center} 
 \includegraphics[totalheight=4.0cm]{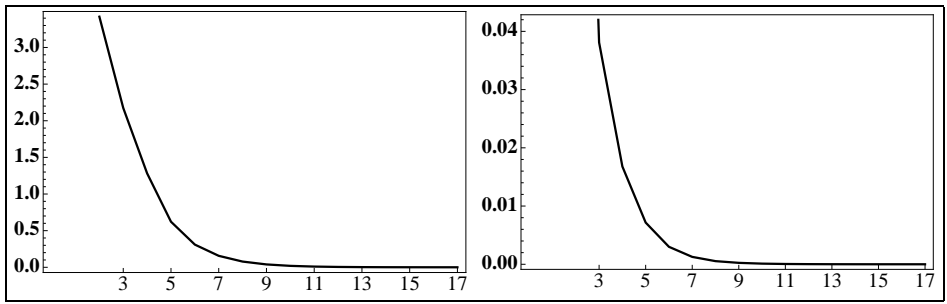}
\caption{\label{anggauss}{Gauss\--Legendre rules:  on the left the values of $ ang(  \stackrel{\ast}{z_n}, \omega_n)$ and on the right  $||\tau||_\infty=|| \stackrel{\ast}{z_n} -\omega_n||_\infty$ (right)}.}
\end{center}
\end{figure}

%=================================================================================================================== 
%\newpage

\medskip
\noindent
{\bf Acknowledgments}

\noindent
This work has been supported by  Instituto de Mecânica\--IDMEC\--LAETA/IST, Centro de Projecto Mecânico, through FCT (Portugal)/program POCTI.  

\noindent
 I express my gratitude to E. Sousa\--Dias for her suggestions which considerably improved the presentation of this work.
 
%\newpage
%==================================================================================================================

\end{document}